\newtheorem{definition}{Definition}
\newtheorem{theorem}{Theorem}
\newtheorem{lemma}{Lemma}
\newtheorem{remark}{Remark}
\newtheorem{example}{Example}
\newtheorem{corollary}{Corollary}
\begin{document}
\thispagestyle{empty} \setcounter{page}{1}

%\noindent
%{\footnotesize {\rm To appear in\\[-1.00mm]
%{\em Dynamics of Continuous, Discrete and Impulsive Systems}}\\[-1.00mm]
%http:monotone.uwaterloo.ca/$\sim$journal} $~$ \\ [.3in]

%%USE THE STUFF BELOW AS A GUIDE TO SET UP THE START OF PAPER%%

\begin{center}
{\Large\bf  Banach Contraction Principle for Cyclical Mappings on
Partial Metric Spaces}

\vskip.20in

T. Abdeljawad$^{a,b,}$\footnote{Corresponding Author E-Mail
Address: thabet@cankaya.edu.tr}, J. O. Alzabut$^{b}$, A.
Mukheimer$^b$, Y. Zaidan$^{b,c}$ \\[2mm]
{\footnotesize $^a$Department of
Mathematics, \c{C}ankaya University, 06530, Ankara, Turkey}\\
{\footnotesize $^b$Department of Mathematics and Physical
Sciences,  Prince Sultan University\\
{\footnotesize P. O. Box 66833, Riyadh 11586, Saudi Arabia}\\
{\footnotesize $^c$Department of Mathematics, University of Wisconsin--Fox Valley}\\
{\footnotesize Menasha, WI 54952, USA}}
\end{center}

\vskip.2in

{\footnotesize \noindent {\bf Abstract.}  In this paper, we prove
that the Banach contraction principle proved by S. G. Matthews in
1994 on $0$--complete partial metric spaces can be extended to
cyclical mappings. However, the generalized contraction principle
proved by D. Ili\'{c}, V. Pavlovi\'{c} and V. Rako\u{c}evi\'{c} in
"Some new extensions of Banach's contraction principle to partial
metric spaces, Appl. Math. Lett. 24 (2011), 1326--1330" on
complete partial metric spaces can not be extended to cyclical
mappings. Some examples are given to illustrate the effectiveness
of our results. Moreover, we generalize some of the results
obtained by W. A. Kirk, P. S. Srinivasan and P. Veeramani in
"Fixed points for mappings satisfying cyclical contractive
conditions, Fixed Point Theory  4 (1) (2003),79--89". Finally, an
Edelstein's type theorem is also extended in case one of the sets
in the cyclic
decomposition is $0$-compact.\\
\\
{\bf Keywords.} Partial metric space; Fixed point, Cyclic mapping,
Banach contraction principle, $0$-Compact set.  }

\vskip.1in

\section{Introduction and Preliminaries} \label{s:1}
The partial metric spaces were first introduced in \cite{Mat94} as
a part of the study of non--symmetric topology, domain theory and
denotational semantics of dataflow networks. In particular, the
author established the precise relationship between partial metric
spaces and the so--called weightable quasimetric spaces and proved
a partial metric generalization of Banach contraction mapping
theorem which is considered to be the core of many extended fixed
point theorems; we refer the reader to the papers
\cite{Mat94,TEKcommon,TEKcontrol,Thweakly,Mat92,0V2004,02005,IATOP2010,IAFTPA2011,wasfi}.

The widespread applications of the notion of partial metric spaces
in programming theory have attracted the attention of many authors
who recently published important results in the direction of
generalizing this principle; see for instance
\cite{Khan,Rhoades,Dutta,BW1969}.  The contraction type conditions
used in these generalizations, however, do not apparently reflect
the structure of partial metric spaces. In the remarkable paper
\cite{Rako}, the authors proved more appropriate contraction
principle in partial metric spaces. Indeed, it is more convenient
to call the contraction type condition used in this paper by
partial contractive condition.

In this paper, we prove that the Banach contraction principle
obtained in \cite{Mat94} on $0$--complete partial metric spaces
can be extended to cyclical mappings. However, the generalized
contraction principle proved in \cite{Rako} on complete partial
metric spaces can not be extended to cyclical mappings. Some
examples are given to illustrate the effectiveness of our results.
In addition to this, we generalize some of the results obtained in
\cite{Kirk}. Finally, an Edelstein's type theorem is also extended
in case one of the sets in the cyclic decomposition is
$0$-compact.

We recall some definitions of partial metric spaces and state some
of their properties. A partial metric space (PMS) is a pair $(X,
p:X\times X\rightarrow\mathbb R^+)$ (where $\mathbb R^+$ denotes
the set of all non negative real numbers) such that
\begin{itemize}
\item[]
\begin{itemize}
    \item [(P1)] $p(x,y) =p(y,x)$\;\;(symmetry);
    \item [(P2)] If $0\leq p(x,x) = p(x,y) = p(y,y)$ then $x=y$\;\;
(equality);
    \item [(P3)] $p(x,x) \leq p(x,y)$\;\;   (small
    self--distances);

    \item [(P4)] $p(x,z) + p(y,y) \leq p(x,y) + p(y,z)$\;\;
(triangularity);
\end{itemize}
\end{itemize}
for all $x,y,z \in X$.

For a partial metric $p$ on $X$,  the function $p^s : X \times X
\rightarrow \mathbb R^+$ given by
\begin{equation}
p^s(x,y)=2p(x,y)-p(x,x)-p(y,y) \label{elma1}
\end{equation}
is a (usual) metric on $X$. Each partial metric $p$ on $X$
generates a $T_0$ topology $\tau_p$ on $X$ with a base of the
family of open $p$-balls $\{B_p(x, \varepsilon) : x\in X,
\varepsilon> 0\}$, where $B_p(x, \varepsilon) =\{y \in X : p(x, y)
< p(x, x) +\varepsilon\}$ for all $x \in X$ and $\varepsilon >0 $.

\begin{definition}
\emph{\cite{Mat94}}
\begin{itemize}
\item[$(i)$] A sequence $\{x_n\}$ in a PMS $(X,p)$ converges to
$x\in X$ if and only if $p(x,x)= \lim_{n \rightarrow \infty}
p(x,x_n)$. \item[$(ii)$] A sequence $\{x_n\}$ in a PMS $(X,p)$ is
called a Cauchy if and only if  $\lim_{n,m\rightarrow \infty}
p(x_n, x_m)$ exists (and finite). \item[$(iii)$] A PMS $(X,p)$ is
said to be complete if every Cauchy sequence $\{x_n\}$ in $X$
converges, with respect to $\tau_p$, to a point $x\in X$ such that
$p(x, x)=\lim_{n,m_\rightarrow \infty} p(x_n, x_m)$. \item[$(iv)$]
A mapping $f:X \rightarrow X$ is said to be continuous at $x_0 \in
X$, if for every $ \varepsilon> 0$, there exists $\delta > 0$ such
that $f(B_p(x_0,\delta))\subset B_p(f(x_0),\varepsilon)$.
\end{itemize}
\end{definition}

\begin{lemma}\emph{\cite{Mat94}}
\begin{enumerate}
\item[$(a1)$] A sequence $\{x_n\}$ is Cauchy in a PMS $(X,p)$ if
and only if $\{x_n\}$ is Cauchy in a metric space $(X,p^s)$.
\item[$(a2)$] A  PMS $(X,p)$ is complete  if and only if the
metric space $(X,p^s)$ is complete. Moreover,
\begin{equation}
\lim_{n \rightarrow \infty}p^s(x,x_n)=0 \Leftrightarrow p(x,
x)=\lim_{n \rightarrow \infty}p(x,x_n)=\lim_{n,m_\rightarrow
\infty} p(x_n, x_m).
 \label{Mat}
\end{equation}

\end{enumerate}
\end{lemma}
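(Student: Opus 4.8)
The plan is to route everything through a single elementary inequality linking $p$ with the metric $p^s$, namely
\begin{equation*}
|p(x_n,x_n)-p(x_m,x_m)|\le p^s(x_n,x_m),
\end{equation*}
which is immediate from (P3): since $p(x_n,x_n)\le p(x_n,x_m)$ and $p(x_m,x_m)\le p(x_n,x_m)$, substituting into the definition $p^s(x_n,x_m)=2p(x_n,x_m)-p(x_n,x_n)-p(x_m,x_m)$ gives both $p^s(x_n,x_m)\ge p(x_n,x_n)-p(x_m,x_m)$ and $p^s(x_n,x_m)\ge p(x_m,x_m)-p(x_n,x_n)$. The same computation with one argument frozen at an arbitrary point $x$ gives $|p(x,x)-p(x_n,x_n)|\le p^s(x,x_n)$.

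For $(a1)$, suppose first that $\{x_n\}$ is Cauchy in $(X,p)$, i.e.\ $\lim_{n,m\to\infty}p(x_n,x_m)=a$ for some finite $a\ge 0$. Specializing the double limit to $n=m$ shows $\lim_{n\to\infty}p(x_n,x_n)=a$ as well, so $p^s(x_n,x_m)=2p(x_n,x_m)-p(x_n,x_n)-p(x_m,x_m)\to 2a-a-a=0$ and $\{x_n\}$ is Cauchy in $(X,p^s)$. Conversely, if $p^s(x_n,x_m)\to 0$, then the displayed inequality makes $\{p(x_n,x_n)\}$ a Cauchy sequence of reals, hence convergent to some $b\ge 0$; consequently $p(x_n,x_m)=\tfrac12\bigl(p^s(x_n,x_m)+p(x_n,x_n)+p(x_m,x_m)\bigr)\to\tfrac12(0+b+b)=b$, so $\lim_{n,m\to\infty}p(x_n,x_m)$ exists and is finite, and $\{x_n\}$ is Cauchy in $(X,p)$.

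Next I would establish \eqref{Mat}. If $p^s(x,x_n)\to 0$, then $p(x_n,x_n)\to p(x,x)$ by the frozen-argument inequality, hence $p(x,x_n)=\tfrac12\bigl(p^s(x,x_n)+p(x,x)+p(x_n,x_n)\bigr)\to p(x,x)$; also $\{x_n\}$ is $p^s$-Cauchy, so by $(a1)$ it is $p$-Cauchy, and the converse computation above identifies its limit with $\lim_{n}p(x_n,x_n)=p(x,x)$, giving $p(x,x)=\lim_n p(x,x_n)=\lim_{n,m\to\infty}p(x_n,x_m)$. Conversely, if these three quantities coincide, then $n=m$ in the double limit forces $p(x_n,x_n)\to p(x,x)$, so $p^s(x,x_n)=2p(x,x_n)-p(x,x)-p(x_n,x_n)\to 0$.

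For $(a2)$ I would just assemble the pieces. If $(X,p)$ is complete and $\{x_n\}$ is $p^s$-Cauchy, then it is $p$-Cauchy by $(a1)$, hence $\tau_p$-converges to some $x$ with $p(x,x)=\lim_{n,m\to\infty}p(x_n,x_m)$; since $\tau_p$-convergence means $p(x,x)=\lim_n p(x,x_n)$ by Definition $(i)$, the hypothesis of \eqref{Mat} holds and $p^s(x,x_n)\to 0$, so $(X,p^s)$ is complete. The reverse implication is symmetric: a $p$-Cauchy sequence is $p^s$-Cauchy, its $p^s$-limit satisfies \eqref{Mat}, and \eqref{Mat} read together with Definitions $(i)$ and $(iii)$ gives exactly the convergence required for completeness of $(X,p)$. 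I expect the only real care needed to be bookkeeping — remembering that completeness of a PMS bundles two requirements, $\tau_p$-convergence and the agreement $p(x,x)=\lim_{n,m\to\infty}p(x_n,x_m)$, both of which are supplied precisely by \eqref{Mat}; all the analytic content sits in the one-line consequence of (P3) with which the argument opens.
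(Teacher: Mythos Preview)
Your argument is correct: the inequality $|p(x_n,x_n)-p(x_m,x_m)|\le p^s(x_n,x_m)$ follows at once from (P3), and with it all three parts reduce to the algebraic identity $p(x,y)=\tfrac12\bigl(p^s(x,y)+p(x,x)+p(y,y)\bigr)$ together with routine limit bookkeeping. Each implication you sketch goes through as written.

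As for comparison with the paper: there is nothing to compare against, since the paper does not prove this lemma at all but simply quotes it from \cite{Mat94} as a known fact. Your proof is the standard one found in the literature, so in effect you have reproduced what the cited reference would contain.
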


\begin{lemma} \label{cont}
Let $(X,p)$ be a partial metric space and let $T:X\rightarrow X$
be a continuous self-mapping. Assume $\{x_n\}\in X$ such that
$x_n\rightarrow z~\texttt{as}~n\rightarrow \infty$. Then
$$\lim_{n\rightarrow \infty} p(Tx_n,Tz)=p(Tz,Tz).$$
\end{lemma}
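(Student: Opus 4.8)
The plan is to unwind the two relevant definitions — convergence in $\tau_p$ and continuity of $T$ at the point $z$ — and then close the argument with the small self--distance axiom (P3), which is what pins the limit down to exactly $p(Tz,Tz)$.

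First I would record what the hypothesis $x_n\to z$ buys us. By Definition $(i)$ this is equivalent to $p(z,z)=\lim_{n\to\infty}p(z,x_n)$; equivalently, for every $\delta>0$ there is an index $N_\delta$ such that $p(z,x_n)<p(z,z)+\delta$ for all $n\geq N_\delta$, i.e. $x_n\in B_p(z,\delta)$ for all $n\geq N_\delta$. (One should note in passing that $B_p(z,\delta)$ genuinely contains $z$, since $p(z,z)<p(z,z)+\delta$, so it is a legitimate $\tau_p$-neighbourhood of $z$ and eventual membership of the sequence is exactly the meaning of $\tau_p$-convergence.)

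Next, fix $\varepsilon>0$. Since $T$ is continuous at $z$ in the sense of Definition $(iv)$, choose $\delta>0$ with $T\bigl(B_p(z,\delta)\bigr)\subset B_p(Tz,\varepsilon)$. For every $n\geq N_\delta$ the previous step gives $x_n\in B_p(z,\delta)$, hence $Tx_n\in B_p(Tz,\varepsilon)$, that is $p(Tz,Tx_n)<p(Tz,Tz)+\varepsilon$. On the other hand (P3) yields $p(Tz,Tz)\leq p(Tz,Tx_n)$ for \emph{every} $n$. Combining the two inequalities and using the symmetry (P1), we get $0\leq p(Tx_n,Tz)-p(Tz,Tz)<\varepsilon$ for all $n\geq N_\delta$. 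As $\varepsilon>0$ was arbitrary, $\lim_{n\to\infty}p(Tx_n,Tz)=p(Tz,Tz)$; in particular the limit exists.

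The proof is essentially a diagram chase through the definitions, so I do not expect a serious obstacle. The only point deserving attention is the role of (P3): it supplies the lower bound $p(Tz,Tz)\leq p(Tx_n,Tz)$ for free, and without it the continuity of $T$ would only give $\limsup_n p(Tx_n,Tz)\leq p(Tz,Tz)$ (indeed $\leq p(Tz,Tz)+\varepsilon$ for each $\varepsilon$), which is weaker than the claimed equality. Everything else is bookkeeping with $p$-balls and the $\varepsilon$--$\delta$ formulation of continuity.
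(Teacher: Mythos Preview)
Your argument is correct and is essentially the same as the paper's own proof: both use continuity at $z$ to find $\delta$ with $T(B_p(z,\delta))\subset B_p(Tz,\varepsilon)$, then use $x_n\to z$ to place $x_n\in B_p(z,\delta)$ eventually, and finally sandwich $p(Tx_n,Tz)$ between $p(Tz,Tz)$ and $p(Tz,Tz)+\varepsilon$ via (P3). The only difference is cosmetic --- you spell out explicitly why (P3) is needed for the lower bound, whereas the paper states the double inequality without comment.
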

\begin{proof}
Let $\epsilon > 0$ be given. Since $T$ is continuous at $z$ find
$\delta > 0$ such that $T(B_p(z,\delta))\subseteq
B_p(Tz,\epsilon)$. Since $x_n\rightarrow z$ then
$\lim_{n\rightarrow \infty}p(x_n,z)=p(z,z)$ and hence find $n_0
\in N$ such that $p(z,z)\leq p(x_n,z)< p(z,z)+\delta$ for all
$n\geq n_0$. That is, $x_n \in B_p(z,\delta)$ for all $n\geq n_0$.
Thus $T(x_n)\in B_p(Tz,\epsilon) $ and so $p(Tz,Tz))\leq
p(Tx_n,Tz)< p(Tz,Tz)+\epsilon$ for all $n\geq n_0$. This shows our
claim.
\end{proof}

A sequence $\{x_n\}$ is called $0$--Cauchy  if $\mbox{lim}_{m,n\to
\infty} p(x_n,x_m)=0$. The partial metric space $(X,p)$ is called
$0$--complete if every $0$--Cauchy sequence in $x$ converges to a
point $x\in X$ with respect to $p$ and $p(x,x)=0$. Clearly, every
complete partial metric space is complete. The converse need not
be true; see \cite{Roma} for more details.

\begin{example} \label{neednot}
Let $X=\mathbb{Q}\cap [0,\infty)$ with the partial metric
$p(x,y)=max\{x,y\}$ where $\mathbb{Q}$ is the set of rationals.
Then $(X,p)$ is a $0$--complete partial metric space which is not
complete.
\end{example}

\begin{theorem} \emph{\cite{Mat94,Roma}}\label{Mathew}
Let$(X,p)$ be a 0-complete partial metric space and
$f:X\rightarrow X$ be such that
$$p(f(x),f(y))\leq \alpha p(x,y) \forall x,y\in X \mbox{and}~ \alpha\in [0,1)$$
there exists a unique $u\in X$ such that $u=f(u)$ and $p(u,u)=0$.
\end{theorem}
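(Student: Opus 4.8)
The plan is to run the Picard iteration adapted to the partial-metric setting. Fix an arbitrary $x_0\in X$ and put $x_{n+1}=f(x_n)$ for $n\geq 0$. First I would iterate the contractive estimate to get $p(x_n,x_{n+1})=p(f(x_{n-1}),f(x_n))\leq\alpha\,p(x_{n-1},x_n)\leq\cdots\leq\alpha^n p(x_0,x_1)$, so that $p(x_n,x_{n+1})\to 0$; combined with the ``small self-distances'' axiom (P3), namely $p(x_n,x_n)\leq p(x_n,x_{n+1})$, this also forces $p(x_n,x_n)\to 0$.

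Next I would show $\{x_n\}$ is a $0$-Cauchy sequence. Using (P4) in the weakened telescoping form $p(x,z)\leq p(x,y)+p(y,z)-p(y,y)\leq p(x,y)+p(y,z)$ (valid since $p(y,y)\geq 0$), for $m>n$ I would chain through $x_n,x_{n+1},\dots,x_m$ to obtain
\[
p(x_n,x_m)\leq\sum_{k=n}^{m-1}p(x_k,x_{k+1})\leq\Bigl(\sum_{k=n}^{\infty}\alpha^k\Bigr)p(x_0,x_1)=\frac{\alpha^n}{1-\alpha}\,p(x_0,x_1),
\]
hence $\lim_{m,n\to\infty}p(x_n,x_m)=0$. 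By $0$-completeness there exists $u\in X$ with $x_n\to u$ in $\tau_p$ and $p(u,u)=0$; in particular $\lim_{n\to\infty}p(x_n,u)=p(u,u)=0$.

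Then I would verify $u$ is a fixed point. Applying (P4) to the triple $u,\,x_{n+1},\,f(u)$ gives $p(u,f(u))\leq p(u,x_{n+1})+p(x_{n+1},f(u))$, where $p(x_{n+1},f(u))=p(f(x_n),f(u))\leq\alpha\,p(x_n,u)\to 0$ and $p(u,x_{n+1})\to 0$; thus $p(u,f(u))=0$. Since also $p(f(u),f(u))\leq\alpha\,p(u,u)=0$ and $p(u,u)=0$, the equality axiom (P2) yields $u=f(u)$. For uniqueness, suppose $v=f(v)$; from $p(v,v)=p(f(v),f(v))\leq\alpha\,p(v,v)$ we get $p(v,v)=0$, and from $p(u,v)=p(f(u),f(v))\leq\alpha\,p(u,v)$ we get $p(u,v)=0$, so $p(u,u)=p(u,v)=p(v,v)=0$ and (P2) gives $u=v$.

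The only delicate point is the upgrade from ``Cauchy'' to ``$0$-Cauchy'': the contraction constant makes the geometric tail $\frac{\alpha^n}{1-\alpha}p(x_0,x_1)$ vanish outright rather than merely stabilize, which is precisely what allows $0$-completeness to furnish a limit $u$ with $p(u,u)=0$ (and hence a genuine fixed point with zero self-distance). Everything else is routine bookkeeping with the partial-metric triangle inequality, so I do not anticipate a substantial obstacle.
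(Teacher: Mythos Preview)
Your argument is correct and is the standard Picard-iteration proof adapted to partial metrics. Note, however, that the paper does not actually supply a proof of this theorem: it is quoted from the literature (Matthews~\cite{Mat94} and Romaguera~\cite{Roma}) and used as a black box later on. What you have written is essentially the argument found in those references, so there is nothing to compare against within the paper itself.
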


Let $\rho_p=\mbox {inf}\{p(x,y) : x,y \in X\}$ and define $X_p=\{x \in X : p(x,x)=\rho_p\}$.\\
\begin{theorem} \emph{\cite{Rako}} \label{Rako}
Let $(X,p)$ be a complete metric space, $\alpha \in [0,1)$ and $T
: X\rightarrow X$ a given mapping. Suppose that for each $x, y \in
X$ the following condition holds $$p(x,y)\leq \mbox{max}\{\alpha
p(x,y), p(x,x), p(y,y)\}.$$ Then
\begin{itemize}
\item[(1)] the set $X_p$ is nonempty; \item[(2)] there is a unique
$u \in X_p$ such that $Tu=u$; \item[(3)] for each $x \in X_p$ the
sequence $\{T^nx\}_{n\geq1}$ converges with respect to the metric
$p^s$ to $u$.
\end{itemize}
\end{theorem}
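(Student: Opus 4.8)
The plan is to obtain (2) and (3) from the classical Banach contraction principle applied to the complete metric space $(X_p,p^s)$, once assertion (1) has been secured; (1) is the substantive part. Throughout I use two elementary observations: putting $y=x$ in the contractive hypothesis $p(Tx,Ty)\le\max\{\alpha p(x,y),p(x,x),p(y,y)\}$ gives $p(Tx,Tx)\le p(x,x)$, so along any orbit $x_n:=T^nx_0$ the self-distances $a_n:=p(x_n,x_n)$ are non-increasing; and, by (P3), $\rho_p=\inf_{x\in X}p(x,x)$.

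The first step is an auxiliary fact: for every $x_0\in X$ the orbit $\{x_n\}$ is $p^s$-Cauchy, and $\lim_n p(x_n,x_n)=\lim_{n,m\to\infty}p(x_n,x_m)=:r$. Indeed $a_n\downarrow r\ge\rho_p$; writing $p(x_n,x_m)\le p(x_n,x_1)+p(x_1,x_m)$ by (P4) and estimating $p(x_1,x_m)=p(Tx_0,Tx_{m-1})$ recursively via the hypothesis produces a geometric bound showing $\sup_{n,m}p(x_n,x_m)<\infty$. If $\gamma_N:=\sup_{n,m\ge N}p(x_n,x_m)$, then the hypothesis yields $\gamma_{N+1}\le\max\{\alpha\gamma_N,a_N\}$; since $(\gamma_N)$ is non-increasing and finite, its limit $\gamma_\infty$ satisfies $\gamma_\infty\le\max\{\alpha\gamma_\infty,r\}$, which (as $\gamma_\infty\ge r$ and $\alpha<1$) forces $\gamma_\infty=r$. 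Hence $p^s(x_n,x_m)=2p(x_n,x_m)-a_n-a_m\to0$, so $\{x_n\}$ is $p^s$-Cauchy, and, $(X,p^s)$ being complete (by (a2)), $x_n\to u_{x_0}$ in $\tau_{p^s}$ with $p(u_{x_0},u_{x_0})=r$.

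To prove (1), choose $x_k\in X$ with $a^{(k)}:=p(x_k,x_k)\to\rho_p$. By the auxiliary fact, $T^nx_k\to u_k$ (with respect to $p^s$) with $p(u_k,u_k)=:r_k$ and $\rho_p\le r_k\le a^{(k)}$, so $r_k\to\rho_p$. Iterating the hypothesis simultaneously along the orbits of $x_j$ and $x_k$, and using that the self-distances along these orbits never exceed $a^{(j)}$, resp. $a^{(k)}$, gives $p(T^nx_j,T^nx_k)\le\max\{\alpha^np(x_j,x_k),a^{(j)},a^{(k)}\}$; letting $n\to\infty$ (using the $p^s$-continuity of $p$, or (P4) twice) yields $p(u_j,u_k)\le\max\{a^{(j)},a^{(k)}\}$. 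Since $p(u_j,u_k)\ge\rho_p$ as well, $\lim_{j,k}p(u_j,u_k)=\rho_p$, and together with $r_k\to\rho_p$ this makes $\{u_k\}$ $p^s$-Cauchy; its $p^s$-limit $u$ satisfies $p(u,u)=\lim_k p(u,u_k)=\lim_{j,k}p(u_j,u_k)=\rho_p$ by (a2), so $u\in X_p$ and (1) holds.

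For (2) and (3): with $y=x\in X_p$ the hypothesis gives $p(Tx,Tx)\le\rho_p$, hence $p(Tx,Tx)=\rho_p$, so $T(X_p)\subseteq X_p$; moreover $X_p$ is $p^s$-closed (if $x_k\in X_p$ and $x_k\to x$ in $p^s$, then $p^s(x_j,x_k)=2p(x_j,x_k)-2\rho_p\to0$ forces $p(x_j,x_k)\to\rho_p$, whence $p(x,x)=\lim_{j,k}p(x_j,x_k)=\rho_p$), so $(X_p,p^s)$ is a complete metric space. For $x,y\in X_p$,
\[
p^s(Tx,Ty)=2p(Tx,Ty)-2\rho_p\le 2\max\{\alpha p(x,y),\rho_p\}-2\rho_p\le 2\alpha\bigl(p(x,y)-\rho_p\bigr)=\alpha\,p^s(x,y),
\]
where we used $\alpha\rho_p\le\rho_p$ and $p(x,y)\ge\rho_p$. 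Thus $T$ restricted to $X_p$ is an $\alpha$-contraction of the complete metric space $(X_p,p^s)$, so the Banach contraction principle (equivalently, Theorem \ref{Mathew} applied to the partial metric $p^s$, for which $\rho_{p^s}=0$) provides a unique $u\in X_p$ with $Tu=u$ and with $T^nx\to u$ with respect to $p^s$ for every $x\in X_p$, which is precisely (2)--(3). The one genuinely delicate point is (1): the hypothesis here is too weak to force $\rho_p=0$ or to make an arbitrary orbit converge into $X_p$ (a point with large self-distance may be fixed), so one is forced to iterate along a whole minimizing sequence and to show that the orbit-\emph{limits} — not the orbits — are $p^s$-Cauchy; the supporting estimate $\sup_{n,m}p(x_n,x_m)<\infty$ in the auxiliary fact also needs a little care, since $p$-values along an orbit have no a priori bound.
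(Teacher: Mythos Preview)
The paper does not prove Theorem~\ref{Rako}; it is quoted from \cite{Rako} (Ili\'{c}--Pavlovi\'{c}--Rako\u{c}evi\'{c}) as background and only used later to contrast with Example~\ref{ex cyc 1}. So there is no ``paper's own proof'' to compare your argument against. (Incidentally, you have silently corrected two evident misprints in the stated hypothesis: $(X,p)$ should be a complete \emph{partial} metric space, and the displayed inequality should read $p(Tx,Ty)\le\max\{\alpha p(x,y),p(x,x),p(y,y)\}$.)

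Your argument is sound and follows the natural strategy of the original source: show every orbit is $p^s$-Cauchy with limit of self-distance equal to the orbit's infimum self-distance, then run a diagonal argument over a minimising sequence to land in $X_p$, and finally observe that $T|_{X_p}$ is a genuine Banach contraction for $p^s$. The only place where your write-up is thin is the claim $\sup_{n,m}p(x_n,x_m)<\infty$: the phrase ``estimating $p(x_1,x_m)=p(Tx_0,Tx_{m-1})$ recursively'' does not by itself close the loop, since the recursion throws you back to $p(x_0,x_{m-1})$ rather than to a pair with both indices lowered. One clean way to finish is to set $D_j=p(x_0,x_j)$, note from (P4) and the hypothesis that $D_j\le p(x_0,x_1)-a_1+\max\{\alpha D_{j-1},a_0\}$, and conclude $D_j\le (p(x_0,x_1)+a_0)/(1-\alpha)$ by induction; boundedness of all $p(x_n,x_m)$ then follows from (P4). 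With that gap filled, the rest of your proof (the $\gamma_N$ recursion, the estimate $p(u_j,u_k)\le\max\{a^{(j)},a^{(k)}\}$, the closedness of $X_p$, and the contraction inequality on $(X_p,p^s)$) is correct as written.
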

\begin{definition} \label{Cyc contraction}
Let $A$ and $B$ be two nonempty closed subsets of a complete
partial metric space $(X,p)$ such that $X=A\cup B$. A mapping
$T:X\rightarrow X$ is called cyclical contraction if it satisfies
\begin{itemize}
  \item[(C1)] $ T(A)\subseteq  B ~~and ~~T(B)\subseteq A;$
  \item[(C2)] $\exists ~0<\alpha < 1: p(Tx,Ty)\leq \alpha p(x,y),~~\forall~x\in A~and~ \forall~y\in B.$
 \end{itemize}
If $(C2)$ in Definition \ref{Cyc contraction} is replaced by the
condition
$$(PC2)~\exists ~0<\alpha < 1: p(Tx,Ty)\leq max\{\alpha p(x,y),p(x,x),p(y,y)\}~~\forall~x\in A~and~ \forall~y\in B.$$
Then $T$ is called a partial cyclical contraction.
\end{definition}
\begin{remark} The partial cyclical contractions reflects the
real structure of partial metric space.
\end{remark}
The proof of the following lemma can be easily achieved by using
the partial metric topology.

\begin{lemma} \label{sclosed}
A subset $A$ of a partial metric space is closed if and only if $x
\in A$ whenever $x_n \in A$ satisfies $x_n\rightarrow x$ as $n \to
\infty$.
\end{lemma}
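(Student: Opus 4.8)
The plan is to read ``$A$ is closed'' as the statement that $X\setminus A$ belongs to the topology $\tau_p$, and then to exploit the fact that for each $x\in X$ the balls $B_p(x,1/n)$, $n\in\mathbb{N}$, form a countable neighbourhood base at $x$. This makes $(X,\tau_p)$ first countable, so sequential arguments are legitimate even though $\tau_p$ is only a $T_0$ topology. The first preparatory step is to confirm that the sequential convergence in part $(i)$ of the Definition coincides with convergence in $\tau_p$: by $(P3)$ one always has $p(x,x)\le p(x,x_n)$, hence $\lim_{n}p(x,x_n)=p(x,x)$ holds if and only if for every $\varepsilon>0$ one has $p(x,x_n)<p(x,x)+\varepsilon$ for all large $n$, i.e. $x_n\in B_p(x,\varepsilon)$ eventually; since the $p$-balls form a base of $\tau_p$, this is precisely $x_n\to x$ in $\tau_p$.

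For the forward implication I would assume $A$ is closed, take $x_n\in A$ with $x_n\to x$, and argue by contradiction: if $x\notin A$ then $x\in X\setminus A$, which is open, so there is $\varepsilon>0$ with $B_p(x,\varepsilon)\subseteq X\setminus A$. But $x_n\to x$ gives $x_n\in B_p(x,\varepsilon)$ for all sufficiently large $n$, contradicting $x_n\in A$. Hence $x\in A$.

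For the converse I would assume the sequential condition and show $X\setminus A$ is open. Let $x\in X\setminus A$. If no ball $B_p(x,\varepsilon)$ were contained in $X\setminus A$, then in particular $B_p(x,1/n)\cap A\neq\emptyset$ for every $n\in\mathbb{N}$; choosing $x_n$ in this intersection yields $p(x,x)\le p(x,x_n)<p(x,x)+1/n$, so $x_n\to x$ and therefore $x\in A$ by hypothesis, a contradiction. Consequently some $B_p(x,\varepsilon)\subseteq X\setminus A$, so $X\setminus A$ is open and $A$ is closed.

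There is no genuine obstacle here; the one point that deserves to be stated explicitly, rather than glossed over, is that $\tau_p$ is merely $T_0$ and not metrizable, so one cannot simply invoke ``metric spaces are sequential.'' The argument must instead run through the explicit countable base $\{B_p(x,1/n)\}_{n}$, as above. Everything else is a routine unwinding of the definitions of $\tau_p$ and of convergence in a partial metric space.
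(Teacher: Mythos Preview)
Your argument is correct. The paper itself does not supply a proof of this lemma; it merely remarks, just before the statement, that ``the proof of the following lemma can be easily achieved by using the partial metric topology.'' Your write-up is therefore a fully fleshed-out version of what the authors leave to the reader, and it proceeds along exactly the lines they indicate: work directly with the topology $\tau_p$, use that the $p$-balls form a base, and exploit the countable neighbourhood base $\{B_p(x,1/n)\}_n$ to pass between closedness and sequential closedness. Both directions are handled cleanly, and your explicit observation that $\tau_p$ is only $T_0$ (so first countability, not metrizability, is the relevant property) is a point worth making that the paper does not spell out.
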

\begin{definition} \label{zero compact}
A set $A$ in a partial metric space $(X,p)$ is called $0$--compact
if for any sequence $\{x_n\}$ in $A$ there exists a subsequence
$\{x_{k_n}\}$ and $x \in A$ such that $\lim_{n\rightarrow
\infty}p(x_{k_n},x)=p(x,x)=0$.
\end{definition}
Clearly a closed subset of a $0$--compact set is $0$-compact.
\begin{lemma} \emph{\cite{TEKcommon,Thweakly}} \label{continuity of pm}
Assume $x_n\rightarrow z$ as $n\rightarrow\infty$ in a PMS $(X,p)$
such that $p(z,z)=0$. Then $\lim_{n\rightarrow\infty}
p(x_n,y)=p(z,y)$ for every $y \in X$.
\end{lemma}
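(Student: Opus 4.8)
The plan is to deduce the statement directly from the triangularity axiom (P4), together with the observation that, by the definition of convergence in a PMS and the hypothesis $p(z,z)=0$, we have $p(z,x_n)\to p(z,z)=0$ as $n\to\infty$. Fix an arbitrary $y\in X$; the whole argument amounts to sandwiching $p(x_n,y)$ between two quantities that both tend to $p(z,y)$.

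First I would apply (P4) with the point $z$ playing the role of the ``middle'' vertex: taking $x=x_n$, middle point $z$, third point $y$ gives $p(x_n,y)+p(z,z)\le p(x_n,z)+p(z,y)$, and since $p(z,z)=0$ this reduces to $p(x_n,y)\le p(z,y)+p(x_n,z)$. For the reverse estimate I would apply (P4) with $x_n$ as the middle vertex: taking $x=z$, middle point $x_n$, third point $y$ gives $p(z,y)+p(x_n,x_n)\le p(z,x_n)+p(x_n,y)$; dropping the nonnegative self-distance $p(x_n,x_n)$ (legitimately, since a partial metric is nonnegative; alternatively one may use (P3)) yields $p(z,y)-p(z,x_n)\le p(x_n,y)$. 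Combining the two bounds and using symmetry (P1) we obtain
\[
-\,p(z,x_n)\ \le\ p(x_n,y)-p(z,y)\ \le\ p(z,x_n),
\]
that is, $|p(x_n,y)-p(z,y)|\le p(z,x_n)$ for all $n$.

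Finally, letting $n\to\infty$ and using $p(z,x_n)\to 0$ gives $\lim_{n\to\infty}p(x_n,y)=p(z,y)$, as claimed. I do not expect any real obstacle here: the only point requiring a little care is keeping the roles of the three vertices in (P4) straight so that the correct self-distance terms appear, and noticing that the hypothesis $p(z,z)=0$ is exactly what is needed to kill the self-distance term in the first inequality while nonnegativity (or (P3)) disposes of it in the second.
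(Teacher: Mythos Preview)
Your argument is correct: the two applications of (P4) give the sandwich $|p(x_n,y)-p(z,y)|\le p(z,x_n)$, and the hypothesis $p(z,z)=0$ together with the definition of convergence yields $p(z,x_n)\to 0$. Note, however, that the paper does not supply its own proof of this lemma---it merely quotes the result from \cite{TEKcommon,Thweakly}---so there is nothing in the present paper to compare your approach against; your sandwich argument is in fact the standard proof found in those references.
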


\section{The Main Results } \label{s:1}
We start this section by a theorem that will motivate to obtain
our main result for cyclic contraction mappings.
\begin{theorem} \label{pre}
Let $(X,p)$ be a 0-complete partial metric space and
$T:X\rightarrow X$  be continuous such that

\begin{equation}\label{pre1}
    p(Tx,T^2x)\leq \alpha p(x,Tx) ~~\forall x\in X,~~where~~ \alpha \in (0,1).
\end{equation}
Then there exists $z\in X$ such that $p(z,z)=0$ and
$p(Tz,z)=p(Tz,Tz)$.
\begin{proof}
Condition (\ref{pre1}) implies that the sequence ${T^n(x)}$ is
0-Cauchy for all $x \in X$. Hence, there exists $z \in X$ such
that $x_n=Tx_{n-1}$ converges to $z$ and $p(z,z)=0$.  The
conclusion that $p(Tz,z)=p(Tz,Tz)$ follows by Lemma \ref{cont},
(P2) and the inequality
$$p(Tz,z)\leq p(Tz,x_{n+1})+p(x_{n+1},z).$$
\end{proof}
\end{theorem}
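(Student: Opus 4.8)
The plan is to run the standard Picard iteration and show the orbit is a $0$-Cauchy sequence, then invoke $0$-completeness and continuity of $T$. Fix $x \in X$ and set $x_0 = x$, $x_n = Tx_{n-1} = T^n x$ for $n \geq 1$. Applying (\ref{pre1}) repeatedly with the point $x_{n-1}$ in place of $x$ gives $p(x_n, x_{n+1}) = p(Tx_{n-1}, T^2 x_{n-1}) \leq \alpha\, p(x_{n-1}, x_n)$, so by induction $p(x_n, x_{n+1}) \leq \alpha^n p(x_0, x_1)$ for all $n$. In particular $p(x_n, x_{n+1}) \to 0$, and since $p(x_n, x_n) \leq p(x_n, x_{n+1})$ by (P3), also $p(x_n, x_n) \to 0$.

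Next I would establish that $\{x_n\}$ is $0$-Cauchy. Using (P4) in the telescoped form $p(x_n, x_m) \leq \sum_{k=n}^{m-1} p(x_k, x_{k+1})$ for $n < m$ (which follows by iterating (P4), the self-distance terms only helping), the geometric bound gives $p(x_n, x_m) \leq \sum_{k=n}^{\infty} \alpha^k p(x_0,x_1) = \frac{\alpha^n}{1-\alpha}\, p(x_0, x_1) \to 0$ as $n \to \infty$. Hence $\lim_{n,m\to\infty} p(x_n, x_m) = 0$, i.e. $\{x_n\}$ is $0$-Cauchy. By $0$-completeness there exists $z \in X$ with $x_n \to z$ in $\tau_p$ and $p(z,z) = 0$, which already gives the first assertion.

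For the second assertion, since $T$ is continuous and $x_n \to z$, Lemma \ref{cont} yields $\lim_{n\to\infty} p(Tx_n, Tz) = p(Tz, Tz)$, that is, $\lim_{n\to\infty} p(x_{n+1}, Tz) = p(Tz, Tz)$. On the other hand $x_{n+1} \to z$ means $\lim_{n\to\infty} p(x_{n+1}, z) = p(z,z) = 0$. Feeding both limits into the triangle-type inequality $p(Tz, z) \leq p(Tz, x_{n+1}) + p(x_{n+1}, z)$ from (P4) and letting $n \to \infty$ gives $p(Tz, z) \leq p(Tz, Tz)$; the reverse inequality $p(Tz,Tz) \leq p(Tz,z)$ is just (P3). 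Therefore $p(Tz, z) = p(Tz, Tz)$, as claimed. The only mildly delicate point is justifying the telescoping estimate for $p(x_n,x_m)$ directly from (P4) rather than from a clean triangle inequality, but since the extra self-distance terms in (P4) appear on the smaller side they can simply be dropped, so this step is routine.
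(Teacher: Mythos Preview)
Your argument is correct and follows essentially the same route as the paper's proof: you show the Picard orbit is $0$-Cauchy via the geometric estimate, invoke $0$-completeness to obtain $z$ with $p(z,z)=0$, then use Lemma~\ref{cont} together with the (P4)-derived inequality $p(Tz,z)\leq p(Tz,x_{n+1})+p(x_{n+1},z)$ to conclude. Your version simply spells out the details the paper leaves implicit, and your use of (P3) for the reverse inequality $p(Tz,Tz)\leq p(Tz,z)$ is the right tool (the paper's reference to (P2) at that step appears to be a slip).
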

We observe that if the partial metric in Theorem \ref{pre} is
replaced by a metric then we conclude that $z$ is a fixed point.
The following theorem is an extension of Theorem 1.1 in
\cite{Kirk}.
\begin{theorem} \label{main}
Let $A$ and $B$ be two nonempty closed subsets of a 0-complete
partial metric space $(X,p)$ such that $X=A\cup B$, and suppose
$T:X\rightarrow X$ be a cyclical contraction self--mapping of
$X$.Then $T$ has a unique fixed point in $A\cap B$.
\end{theorem}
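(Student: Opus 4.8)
The plan is to adapt the classical cyclic contraction argument of Kirk–Srinivasan–Veeramani to the $0$-complete partial metric setting, using the machinery already assembled in the excerpt. Fix any $x_0 \in A$ (without loss of generality) and define the Picard iterates $x_{n+1} = Tx_n$. By (C1) the sequence alternates between $A$ and $B$, and more importantly, consecutive terms $x_n, x_{n+1}$ always lie one in $A$ and one in $B$, so (C2) applies to give $p(x_{n+1},x_{n+2}) \le \alpha\, p(x_n,x_{n+1})$ for every $n$. Iterating, $p(x_n,x_{n+1}) \le \alpha^n p(x_0,x_1) \to 0$.

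**The next step** is to show $\{x_n\}$ is $0$-Cauchy. For $m > n$, the partial-metric triangle inequality (P4), applied repeatedly in the form $p(x_n,x_m) \le p(x_n,x_{n+1}) + p(x_{n+1},x_m) - p(x_{n+1},x_{n+1}) \le p(x_n,x_{n+1}) + p(x_{n+1},x_m)$, telescopes to $p(x_n,x_m) \le \sum_{k=n}^{m-1} p(x_k,x_{k+1}) \le \frac{\alpha^n}{1-\alpha}\, p(x_0,x_1)$, which tends to $0$ as $n \to \infty$. Hence $\lim_{m,n\to\infty} p(x_n,x_m) = 0$, so $\{x_n\}$ is $0$-Cauchy. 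By $0$-completeness there is $z \in X$ with $x_n \to z$ (with respect to $\tau_p$) and $p(z,z) = 0$.

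**To locate $z$ in $A \cap B$:** the subsequence $\{x_{2n}\}$ lies in $A$ (or $B$, depending on where we started) and also converges to $z$; since $A$ is closed, Lemma~\ref{sclosed} gives $z \in A$. The subsequence $\{x_{2n+1}\}$ lies in $B$ and also converges to $z$, so likewise $z \in B$. Thus $z \in A \cap B$. Now I must check $Tz = z$. Since $z \in A$ and $x_n$ (for $n$ large, along the indices where $x_n \in B$) we have from (C2) that $p(Tz, x_{n+1}) = p(Tz, Tx_n) \le \alpha\, p(z, x_n)$ for those $n$ with $x_n \in B$. Because $p(z,z) = 0$, Lemma~\ref{continuity of pm} yields $p(z,x_n) \to p(z,z) = 0$, so $p(Tz,x_{n+1}) \to 0$ along that subsequence. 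Again by Lemma~\ref{continuity of pm} (using $p(z,z)=0$ so that $p(x_n,y) \to p(z,y)$), $p(Tz, x_{n+1}) \to p(Tz, z)$, forcing $p(Tz,z) = 0$. Since $p(Tz,Tz) \le p(Tz,z) = 0$ and $p(z,z) = 0$, property (P2) gives $Tz = z$.

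**Uniqueness and the main obstacle.** If $w$ is any fixed point, then $w = Tw = T^2 w = \cdots$; applying (C1) repeatedly shows $w \in A \cap B$ as well (since $w = Tw$ forces $w$ into the image of both restrictions). Then $p(w,w) = p(Tw,Tw) \le \alpha\, p(w,w)$ by (C2) applied with $x = y = w \in A \cap B$, so $p(w,w) = 0$; and $p(z,w) = p(Tz,Tw) \le \alpha\, p(z,w)$, giving $p(z,w) = 0$, whence $z = w$ by (P2). The step I expect to require the most care is verifying $Tz = z$: one must be careful about which subsequence of $\{x_n\}$ lies in which set so that (C2) is legitimately applicable to the pair $(z, x_n)$, and one must invoke the $p(z,z)=0$ version of continuity (Lemma~\ref{continuity of pm}) rather than naive convergence, since in a partial metric space $x_n \to z$ does not by itself give $p(x_n,y) \to p(z,y)$.
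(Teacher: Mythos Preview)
Your proof is correct and follows the same overall architecture as the paper: iterate $T$, show the Picard sequence is $0$--Cauchy, obtain a limit $z$ with $p(z,z)=0$, and use closedness of $A$ and $B$ to place $z$ in $A\cap B$. The only divergence is in the final step. The paper, having shown $A\cap B\neq\emptyset$, observes that $T$ restricted to $A\cap B$ is an ordinary contraction on a $0$--complete space and simply invokes Theorem~\ref{Mathew} (Matthews' Banach principle) to produce the unique fixed point. You instead verify $Tz=z$ and uniqueness by hand, using Lemma~\ref{continuity of pm} and the contractive inequality directly. Your route is slightly more self-contained and makes explicit the role of $p(z,z)=0$; the paper's route is shorter but tacitly uses that $A\cap B$ is $T$-invariant and $0$--complete as a closed subset.
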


\begin{proof}
Condition (C1) implies that for any $x \in A\cup B$
$$p(Tx,T^2x)\leq\alpha p(x,Tx)$$
and this by (P4) implies that the sequence $\{T^n(x)\}$ is
0-Cauchy for any $x \in X$. Consequently, $\{T^n(x)\}$ converges
to some point $z \in X$ such that $p(z,z)=0$. However, in view of
(C2) an infinite number of terms of the sequence $\{T^n(x)\}$ lie
in $A$ and an infinite number of terms lie in $B$. Then by Lemma
\ref{sclosed} we conclude that $z \in A\cap B$, so $A\cap B \neq
\emptyset$. Now (C1) and (C2) imply that the map $T$ restricted to
$A\cap B$ is contraction. Then the result follows by Theorem
\ref{Mathew}.
\end{proof}

In what follows, we give an example showing that the
generalization to partial metric space in Theorem \ref{main} is
proper.

\begin{example} \label{proper}
Let $X=[0,1]$, $A=[0,\frac{1}{2}]$ and $B=[\frac{1}{2},1]$. Then
$X=A\cup B$ and $A \cap B=\{\frac{1}{2}\}$. Provide $X$ with the
partial metric
$$p(x,y)=|x-y|\;\; \mbox{if \;both}\;\; x, y \in [0,1)\; \;\mbox{and}\;\;
p(x,y)=max\{x,y\}\;\; \mbox{otherwise}.$$ Then clearly $(X,p)$ is
a complete partial metric space. Define $T:X\rightarrow X$ by
$T(x)=\frac{1}{2}$ if  $0\leq x <1$ and $T(1)=0$. Then it can be
easily checked that $T$ is a cyclical contraction with $\alpha
=\frac{3}{4}$. Notice that the cyclical contractive condition of
Theorem \ref{main} is not satisfied when the partial metric $p$ is
replaced by the usual absolute value metric.

\end{example}
The following example shows that Theorem \ref{Rako} can not be
extended for cyclical  mappings when the cyclical contraction is
replaced by a partial cyclical contraction.

\begin{example} \label{ex cyc 1}
Let $A=[0,1]$,  $B=[3,4]\cup \{\frac{3}{2}\}$ and $X=A\cup B$.
Define $p:X\times X\rightarrow [0,\infty)$ by $p(x,y)=max\{x,y\}$.
Then $(X,p)$ is a complete partial metric space. Define
$T:X\rightarrow X$ by
\begin{displaymath}
T(x)=\{\begin{array}{ll}\frac{3}{2},& \textrm{$0 \le x <1$}\\
\;\;\frac{1}{2}, & \textrm{$x=\frac{3}{2}$}\\
\frac{x-2}{2}, & \textrm{$3 \le x \le 4$}
\end{array}.
\end{displaymath}
It can be easily seen that
$$p(Tx,Ty)=max\{\frac{3}{2}, \frac{y-2}{2}\}=\frac{3}{2}\leq max\{\alpha p(x,y),p(x,x),p(y,y)\}=y,$$
for any $x \in A$ , $y \in B$ and any $\alpha \in (0,1)$. However,
$ A\cap B=\emptyset$.
\end{example}

\begin{corollary} \label{common}
Let $A$ and $B$ be two nonempty closed subsets of a complete
partial metric space $(X,p)$ such that $X=A\cup B$. Let $f:A
\rightarrow B$ and $g:B\rightarrow A$ be two functions such that
$f(x)=g(x)$ for all $x \in A\cap B$ and

\begin{equation}\label{common1}
 p(f(x),g(y))\leq \alpha p(x,y)~~~ \forall x\in A~~and~~ y \in B,
\end{equation}
where $0< \alpha <1$. Then there exists a unique $x_0 \in A\cap B$
such that
$$f(x_0)=g(x_0)=x_0.$$
\end{corollary}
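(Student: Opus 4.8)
The plan is to reduce this statement directly to Theorem \ref{main} by gluing $f$ and $g$ into a single cyclical contraction on $X$. Define $T:X\rightarrow X$ by $Tx = f(x)$ for $x\in A$ and $Tx = g(x)$ for $x\in B$. The only point where this definition is ambiguous is on $A\cap B$, and there the hypothesis $f(x)=g(x)$ guarantees that $T$ is well defined on all of $X=A\cup B$. Since $f(A)\subseteq B$ and $g(B)\subseteq A$, condition (C1) of Definition \ref{Cyc contraction} holds. For (C2), take $x\in A$ and $y\in B$; then $Tx=f(x)$ and $Ty=g(y)$, so inequality (\ref{common1}) gives exactly $p(Tx,Ty)=p(f(x),g(y))\leq\alpha p(x,y)$ with $0<\alpha<1$. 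Thus $T$ is a cyclical contraction on the complete (hence $0$-complete) partial metric space $(X,p)$.

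Now I would apply Theorem \ref{main} to conclude that $T$ has a unique fixed point $x_0\in A\cap B$. Since $x_0\in A$ we have $Tx_0=f(x_0)$, and since $x_0\in B$ we have $Tx_0=g(x_0)$; combined with $Tx_0=x_0$ this yields $f(x_0)=g(x_0)=x_0$, which is the desired conclusion. Uniqueness of such a point follows from the uniqueness clause in Theorem \ref{main}: any point $x$ with $f(x)=g(x)=x$ must in particular lie in $A\cap B$ (it is both in the domain of $f$ and in the domain of $g$, or one checks directly that $x=f(x)\in B$ and $x=g(x)\in A$) and be a fixed point of $T$, hence equals $x_0$.

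The argument is essentially a bookkeeping exercise, so there is no serious obstacle; the one point that needs a moment's care is the well-definedness of $T$ on the overlap $A\cap B$, which is exactly why the compatibility condition $f(x)=g(x)$ on $A\cap B$ is included in the hypotheses. Everything else — checking (C1), checking (C2), and translating the fixed point of $T$ back into the statement about $f$ and $g$ — is immediate from the definitions, and completeness of $(X,p)$ already implies $0$-completeness, so the hypotheses of Theorem \ref{main} are met without further work.
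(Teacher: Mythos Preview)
Your proof is correct and follows exactly the paper's approach: define $T$ on $X=A\cup B$ by gluing $f$ and $g$, use the compatibility hypothesis on $A\cap B$ to ensure $T$ is well defined, and then apply Theorem~\ref{main}. The additional details you spell out (checking (C1), (C2), and translating the fixed point back to $f$ and $g$) are routine and entirely in line with the paper's brief argument.
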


\begin{proof}
Apply Theorem \ref{main} to the mapping $T:A\cup B\rightarrow
A\cup B$ defined by setting
\begin{displaymath}
T(x)=\{\begin{array}{ll}f(x),& \textrm{$x \in A$}\\
\;\;g(x), & \textrm{$x \in B$}
\end{array}.
\end{displaymath}
Observe that the assumption  $f(x)=g(x)$ for all $x \in A\cap B$
implies that $T$ is well defined.
\end{proof}

\begin{remark} In the metric space case, condition (\ref{common1}) implies
that the map $T$ is well defined.
\end{remark}

Obviously Theorem \ref{main} can be extended to the following
version.

\begin{theorem}
Let $\{A_i\}_{i=1}^k$ be nonempty closed subsets of a
$0$--complete partial metric space, and suppose that
$T:\bigcup_{i=1}^k A_i\rightarrow \bigcup_{i=1}^k A_i$ satisfies
the following conditions (where $A_{k+1}=A_1$)
\begin{itemize}
\item[(1)] $T(A_i)\subseteq A_{i+1}$ for $1\leq i\leq k$;
\item[(2)] $\exists \; \alpha \in (0,1)$ such that
$p(T(x),T(y))\leq \alpha p(x,y)\; \forall x\in A_i, y\in A_{i+1}$
for $1\leq i\leq k$.
\end{itemize}
Then $T$ has a unique fixed point.
\end{theorem}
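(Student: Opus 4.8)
The plan is to imitate the proof of Theorem \ref{main}, upgrading the two-set argument to $k$ sets. First I would fix an arbitrary $x \in \bigcup_{i=1}^k A_i$ and examine its orbit $\{T^n x\}_{n\ge 0}$. By condition (1) the iterates cycle through the sets: if $x \in A_i$ then $T^n x \in A_{i+n}$ with indices read modulo $k$ (and $A_{k+1}=A_1$), so any two consecutive iterates $T^n x, T^{n+1}x$ lie in a consecutive pair $A_j, A_{j+1}$. Hence condition (2) applies and gives $p(T^{n+1}x, T^{n+2}x)\le \alpha\, p(T^n x, T^{n+1}x)$, whence $p(T^n x, T^{n+1}x)\le \alpha^n p(x,Tx)$. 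Using (P4) in the weakened form $p(u,w)\le p(u,v)+p(v,w)$ and summing a geometric series, for $m>n$ one gets $p(T^n x, T^m x)\le \frac{\alpha^n}{1-\alpha}\,p(x,Tx)\to 0$, so $\{T^n x\}$ is $0$-Cauchy. By $0$-completeness there is $z\in X$ with $T^n x\to z$ and $p(z,z)=0$.

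Next I would show $z\in\bigcap_{i=1}^k A_i$. For each fixed residue $r\in\{0,1,\dots,k-1\}$ the subsequence $\{T^{nk+r}x\}_n$ lies entirely in a single set $A_{j(r)}$, and it still converges to $z$; since $A_{j(r)}$ is closed, Lemma \ref{sclosed} forces $z\in A_{j(r)}$. As $r$ ranges over all residues, $j(r)$ ranges over all of $\{1,\dots,k\}$, so $z$ belongs to every $A_i$; in particular $\bigcap_{i=1}^k A_i\neq\emptyset$.

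Then I would restrict $T$ to $Y:=\bigcap_{i=1}^k A_i$. If $y\in Y$ then $y\in A_i$ for every $i$, so $Ty\in A_{i+1}$ for every $i$, i.e. $Ty\in Y$; thus $T(Y)\subseteq Y$. Moreover, given $y,y'\in Y$ we may regard $y\in A_1$ and $y'\in A_2$, so (2) yields $p(Ty,Ty')\le \alpha\, p(y,y')$, i.e. $T|_Y$ is a Banach contraction. Since $Y$ is a closed subset of a $0$-complete partial metric space it is itself $0$-complete (a $0$-Cauchy sequence in $Y$ is $0$-Cauchy in $X$, hence converges to a point of $X$ with zero self-distance, and that point lies in $Y$ by closedness via Lemma \ref{sclosed}). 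Theorem \ref{Mathew} then provides a unique $u\in Y$ with $Tu=u$ and $p(u,u)=0$.

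Finally, for uniqueness in all of $\bigcup_{i=1}^k A_i$: if $Tv=v$, pick $i$ with $v\in A_i$; then $v=Tv\in A_{i+1}$, and inductively $v\in A_j$ for all $j$, so $v\in Y$, whence $v=u$ by the uniqueness already established in $Y$. I do not expect a genuine obstacle here; the only points demanding a little care are the bookkeeping with indices modulo $k$ needed to see that the orbit's limit lands in the full intersection, and the (routine) verification that a closed subspace of a $0$-complete space is $0$-complete.
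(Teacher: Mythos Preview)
Your proposal is correct and follows exactly the paper's approach: show the orbit is $0$--Cauchy, use closedness and the cyclic structure to place the limit in $\bigcap_{i=1}^k A_i$, and then apply Theorem~\ref{Mathew} to the restriction of $T$ to that intersection. The paper's proof is a two-line sketch of precisely this argument; your version simply fills in the details (the index bookkeeping, the $0$--completeness of the closed intersection, and the global uniqueness step) that the paper leaves implicit.
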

\begin{proof} One only need to observe that given $x\in
\bigcup_{i=1}^k A_i$, infinitely many terms of the Cauchy sequence
$\{T^n(x)\}$ lie in each $A_i$. Thus $\bigcap_{i=1}^k A_i\neq
\emptyset$, and the restriction of $T$ to this intersection is a
contraction mapping.
\end{proof}

\begin{remark} \label{final}
It is of our belief that Theorem \ref{main} can be extended to
more general cyclical contraction mappings. However, it would be
of more interest if the contractive type conditions are considered
with control functions.
\end{remark}
The following theorem is an extension of an Edelstein's type to
partial metric spaces.

\begin{theorem}\label{Ed}
Let $\{A_i\}_{i=1}^k$ be nonempty closed subsets of a partial
metric space $(X,p)$, at least one of which is $0$--compact, and
suppose that $T:\bigcup_{i=1}^k A_i\rightarrow \bigcup_{i=1}^k
A_i$ satisfies the following conditions (where $A_{k+1}=A_1$).
\begin{itemize}
\item[(1)] $T(A_i)\subseteq A_{i+1}$ for $1\leq i\leq k$;
\item[(2)]  $p(T(x),T(y))< p(x,y)\; \forall \;x\in A_i, y\in
A_{i+1}$ for $1\leq i\leq k$.
\end{itemize}
Then $T$ has a unique fixed point.
\end{theorem}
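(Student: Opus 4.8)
The plan is to transport Edelstein's compactness argument to this cyclic, partial-metric setting. Uniqueness is immediate: a fixed point $u$ satisfies $u=Tu\in A_{i+1}$ for every $i$, so $u\in\bigcap_{i=1}^k A_i$; two distinct fixed points $u\neq v$ would then form an admissible pair for~(2), forcing $p(u,v)=p(Tu,Tv)<p(u,v)$, which is absurd. For existence I would relabel so that $A_1$ is the $0$-compact set, fix $x_0\in A_1$, and set $x_n=T^nx_0$; by~(1) we then have $x_{nk}\in A_1$ for all $n$. Writing $\phi_n=p(x_n,x_{n+1})$, condition~(2) applied to the consecutive pair $(x_n,x_{n+1})$ gives $\phi_{n+1}<\phi_n$ unless $x_n=x_{n+1}$, and in the latter case $x_n$ is already a fixed point; so $\phi_n$ decreases to some $\ell\ge0$. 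Applying $0$-compactness of $A_1$ to $\{x_{nk}\}$ produces a subsequence $x_{n_jk}\to z\in A_1$ with $p(x_{n_jk},z)\to p(z,z)=0$, and Lemma~\ref{continuity of pm} then yields $p(x_{n_jk},y)\to p(z,y)$ for every $y$.

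The second step is to show $p(z,Tz)\le\ell$. For $n_j\ge1$ the point $x_{n_jk-1}$ lies in $A_k$, and (P4) gives $p(x_{n_jk-1},z)\le\phi_{n_jk-1}+p(x_{n_jk},z)\to\ell$. Since $(A_k,A_{k+1})=(A_k,A_1)$ is a consecutive pair, (2) gives $p(x_{n_jk},Tz)=p(Tx_{n_jk-1},Tz)\le p(x_{n_jk-1},z)$ (the degenerate equality case gives $z=Tz$ directly); letting $j\to\infty$ and using $p(x_{n_jk},Tz)\to p(z,Tz)$ we obtain $p(z,Tz)\le\ell$. Consequently, \emph{if $\ell=0$ we are done}: then $p(z,Tz)=0$, so (P3) forces $p(z,z)=p(Tz,Tz)=0$ and (P2) gives $z=Tz$. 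So everything reduces to excluding $\ell>0$.

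For this last, and hardest, step I would try to exploit $0$-compactness a second time. Iterating (P4) gives $p(x_{n_jk-m},z)\le m\ell+o(1)$ for $m=1,\dots,k-1$, with $x_{n_jk-m}\in A_{k+1-m}$; so $\ell=0$ would in turn force $z\in\bigcap_{i=1}^k A_i$, and then for any $u\in A_i$ the pair $(u,z)$ is consecutive, i.e. $p(Tu,Tz)\le p(u,z)$, so $T$ is continuous at $z$; with continuity one pushes the limit around the whole cycle (Lemma~\ref{cont}) to get $x_{n_jk+r}\to T^rz$, hence $\phi_{n_jk+r}\to p(T^rz,T^{r+1}z)$ and $p(T^rz,T^{r+1}z)=\ell$ for all $r$, which is impossible once $\ell>0$ since these distances strictly decrease in $r$. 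The anticipated obstacle is exactly the circularity this exposes: the continuity of $T$ at $z$ that the Edelstein contradiction needs appears to require $z\in\bigcap A_i$, which in turn seems to require $\ell=0$, the very thing to be proved. Breaking this loop is the heart of the proof — one natural attempt is an iteration: replace $(x_0,\ell)$ by $(z,\ell_1)$ with $\ell_1\le p(Tz,T^2z)<p(z,Tz)\le\ell$, repeat to produce cluster points $z^{(m)}\in A_1$ with $p(z^{(m)},z^{(m)})=0$, $p(z^{(m)},Tz^{(m)})\le\ell_m$ and $\ell_m$ strictly decreasing, and then apply compactness of $A_1$ to $\{z^{(m)}\}$ — and, if this still does not close the gap, to fall back on assuming $T$ continuous, exactly as in Edelstein's original theorem where continuity on a compact space comes for free.
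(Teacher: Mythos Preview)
Your orbit approach via $x_n=T^nx_0$ and the monotone sequence $\phi_n=p(x_n,x_{n+1})$ is the natural Picard--Edelstein strategy, but as you yourself diagnose, it stalls: ruling out $\ell>0$ needs continuity of $T$ at the cluster point $z$, which seems to need $z\in\bigcap A_i$, which seems to need $\ell=0$. Your proposed escape by iterating $\ell>\ell_1>\ell_2>\cdots$ does not close the gap either, since a strictly decreasing sequence of positive reals may well have positive infimum, and nothing forces the successive cluster points $z^{(m)}$ to converge or to improve by a definite amount. So the proposal, as it stands, has a genuine hole exactly where you say it does.

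The paper avoids this circularity by abandoning orbits entirely in the first phase and going after the \emph{intersection} rather than a fixed point. Set $\delta=p(A_1,A_k)=\inf\{p(x,y):x\in A_1,\ y\in A_k\}$ and pick minimizing sequences $\{x_n\}\subset A_1$, $\{u_n\}\subset A_k$ with $p(x_n,u_n)\le\delta+1/n$. By $0$-compactness of $A_1$ pass to $x_n\to x_0$ with $p(x_0,x_0)=0$, so $p(x_0,u_n)\to\delta$. If $\delta>0$, apply $T^{k+1}$: condition~(2) iterated gives $p(T^{k+1}x_0,T^{k+1}u_n)<p(x_0,u_n)$, and the images $T^{k+1}u_n$ land back in $A_1$, so a \emph{second} application of $0$-compactness yields $T^{k+1}u_n\to z\in A_1$ with $p(z,z)=0$. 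Lemma~\ref{continuity of pm} then gives $p(z,T^{k+1}x_0)\le\delta$, and $k-1$ further applications of~(2) to the consecutive pair $(z,T^{k+1}x_0)\in A_1\times A_2$ produce a pair in $A_k\times A_1$ at distance strictly below $\delta$, contradicting the definition of $\delta$. Hence $\delta=0$, whence $A_1\cap A_k\neq\emptyset$; the paper then iterates the whole scheme on the family $B_i=A_i\cap A_{i+1}$ to force $\bigcap_{i=1}^kA_i\neq\emptyset$, after which the restriction of $T$ to this $0$-compact intersection is handled as a single-set Edelstein map. The device you are missing is precisely this minimizing-pair argument: it uses $0$-compactness \emph{twice} (once to pin down one side of the pair, once to pin down the image of the other side), and it extracts a strict drop below the infimum without ever invoking continuity of $T$.
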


\begin{proof}
Let $A_1$ be $0$--compact and $\delta=p(A_1,A_k)=\inf\{p(x,y):x
\in A_1, y \in A_k\}$. From the definition of $\delta$ there exist
sequences $\{x_n\}\subset A_1$ and $\{u_n\}\subset A_k$ such that
$$p(x_n,u_n)\leq \delta+ \frac{1}{n}.$$
By the $0$--compactness of $A_1$ we may assume that there exists
$x_0 \in A_1$ such that the limit $\lim_{n\rightarrow \infty}
p(x_n,x_0)=p(x_0,x_0)=0$. Then by the triangle inequality it
follows that $\lim_{n\rightarrow \infty}p(x_0,u_n)=\delta$. Let
$\delta >0$. Then
\begin{equation}\label{ed1}
    p(T^{k+1}(x_0), T^{k+1}(u_n))<...< p(x_0,u_n).
\end{equation}
Since the sequence $\{T^{k+1}(u_n)\}$ is in $A_1$ and $A_1$ is
$0$--compact, we may assume that there exists $z\in A_1$ such that
 $\lim_{n\rightarrow \infty}p(T^{k+1}(u_n),z)=p(z,z)=0$.
 By (\ref{ed1}) and Lemma \ref{continuity of pm} we conclude that
$$p(z,T^{k+1}(x_0))\leq \delta.$$
It follows that
$$p(T^{k-1}(z),T^{2k}(x_0))< \delta$$
but since $T^{k-1}(z) \in A_k$ and $F^{2k}(x_0) \in A_1$ we obtain
a contradiction. Therefore, we conclude that $\delta=0$ and $A_1
\cap A_k \neq \emptyset$. Thus, by assumption (1) of the theorem,
$A_1 \cap A_2\neq\emptyset$.

 We now consider the sets $B_1=A_1\cap A_2$, $B_2=A_2\cap A_3,\dots,B_k=A_k\cap A_1$. In view of
 assumption (1) these sets are all nonempty (and closed) and $B_1$ is $0$-compact.
 Thus the assumptions (1) and (2) of the theorem hold for $T$ and the family $\{B_i\}_{i=1}^k$. By repeating
 the arguments just given we arrive at
$$B_1\cap B_k\neq\emptyset.$$
it follows that $A_1 \cap A_2\cap A_3\neq \emptyset$. Continuing
step--by--step, we conclude that $A :=\cap_{i=1}^k\neq \emptyset$.
The uniqueness, however,  follows from the fact that any fixed
point of $T$ necessarily lies in $A :=\cap_{i=1}^k$ which is
clearly obtained by assumption (1).
\end{proof}

\end{document}